\def\th@plain{%
  \thm@notefont{}
  \itshape 
}
\def\th@definition{%
  \thm@notefont{}
  \normalfont 
}
\begin{document}
\title{The Number of Locally $p$-stable Functions on $Q_n$}
\author{Asier Calbet}
\affil{School of Mathematical Sciences \\ Queen Mary, University of London \\ Mile End Road, London E1 4NS \\ United Kingdom
\\ \href{mailto:a.calbetripodas@qmul.ac.uk}{\nolinkurl{a.calbetripodas@qmul.ac.uk}}}
\date{}
\maketitle

\newtheorem{theorem}{Theorem}
\newtheorem{lemma}{Lemma}
\newtheorem{definition}{Definition}
\newtheorem*{thr}{Theorem \ref{est.}}
\newtheorem*{theo}{Theorem \ref{stab.}}
\newtheorem{prop}{Proposition}
\newtheorem{q}{Question}
\newtheorem{cor}{Corollary}
\newtheorem{obs}{Observation}

\begin{abstract}
\noindent A Boolean function $f:V(G) \to \{-1,1\}$ on the vertex set of a graph $G$ is \emph{locally $p$-stable} if for every vertex $v$ the proportion of neighbours $w$ of $v$ with $f(v)=f(w)$ is exactly $p$. This notion was introduced by Gross and Grupel in \cite{Reference 1} while studying the scenery reconstruction problem. They give an exponential type lower bound for the number of isomorphism classes of locally $p$-stable functions when $G=Q_n$ is the $n$-dimensional Boolean hypercube and ask for more precise estimates. In this paper we provide such estimates by improving the lower bound to a double exponential type lower bound and finding a matching upper bound. We also show that for a fixed $k$ and increasing $n$, the number of isomorphism classes of locally $(1-k/n)$-stable functions on $Q_n$ is eventually constant. The proofs use the Fourier decomposition of functions on the Boolean hypercube.
\end{abstract}

\section{Introduction}
Let $G$ be a graph with vertex set $V(G)$. By a \emph{Boolean function} on $G$ we mean a function $f: V(G) \to \{-1,1\}$. Motivated by the scenery reconstruction problem, Gross and Grupel, in \cite{Reference 1}, define a \emph{locally $p$-stable} function on $G$ to be a Boolean function $f$ on $G$ such that for every vertex $v \in V(G)$ we have

$$ \frac{|\{w \in \Gamma(v) : f(v) = f(w)\}|}{d(v)} = p \  , $$

where $\Gamma(v)$ denotes the neighbourhood of $v$ in $G$ and $d(v)=|\Gamma(v)|$ is the degree of $v$. They say that two Boolean functions $f$ and $g$ on $G$ are \emph{isomorphic} if there is an automorphism $\phi$ of $G$ such that $f=g \circ \phi$. They show that the scenery reconstruction problem on the $n$-dimensional Boolean hypercube is impossible for $n \geq 4$ by constructing two non-isomorphic locally $p$-stable functions and noting that the scenery processes for these functions have exactly the same distribution. \newline

Let us now restrict ourselves to the case when $G=Q_n$ is the $n$-dimensional Boolean hypercube. It will be more convenient for us to work with a re-parametrised definition of locally $p$-stable functions.

\begin{definition}
A \emph{$k$-function} is a Boolean function $f$ on $Q_n$ such that for every vertex $v \in V(G)$ we have

$$ |\{w \in \Gamma(v) : f(v) \neq f(w)\}| = k  . $$
\end{definition}

Note that a $k$-function is precisely a locally $p$-stable function on $Q_n$ with $p=1-k/n$. Combinatorially, a $k$-function corresponds to a partition of $Q_n$ into two parts such that every vertex has precisely $k$ neighbours in the opposite part. \newline

It will also be more convenient for us to work with an extended notion of isomorphism. We say that two real-valued functions $f$ and $g$ on $Q_n$ are \emph{isomorphic} if there is an automorphism  $\phi$ of $Q_n$ and a sign $\epsilon \in \{-1,1\}$ such that $f=\epsilon g \circ \phi$. Note that if $f$ and $g$ are isomorphic then $f$ is a $k$-function if and only if $g$ is. The number of isomorphism classes of $k$-functions changes by at most a factor of $2$ when passing from our definition of isomorphism to that of Gross and Grupel. \newline   

Let us now introduce some notation. For integers $0 \leq k \leq n$, let $F(n,k)$ denote the number of $k$-functions on $Q_n$ and let $G(n,k)$ denote the number of isomorphism classes of 
$k$-functions on $Q_n$. We are mainly interested in $G(n,k)$, but will need $F(n,k)$ in the proofs. \newline

In \cite{Reference 1}, Gross and Grupel obtain a lower bound of the form

$$ G(n,k) = 2^{\Omega(\sqrt{k})}  $$

for $n \geq 2k-2$ and ask for more precise estimates (Question 5.12.). \newline

In this paper, we provide such estimates:

\begin{theorem}\label{est.}

Let $0 \leq k \leq n$ be integers. Then

$$2^{2^{m+o(1)}} \leq G(n,k) \leq 2^{2^{2m+O\left(\log_2 m\right)}} , $$

where $m=min(k,n-k)$.

\end{theorem}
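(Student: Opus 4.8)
The plan is to reformulate $k$-functions via Fourier analysis and then attack the two bounds separately. Observe that $f$ is a $k$-function on $Q_n$ exactly when $\sum_{i=1}^n f(x\oplus e_i) = (n-2k)f(x)$ for all $x$, i.e.\ when the $\pm1$-valued function $f$ is an eigenfunction of the adjacency operator of $Q_n$ with eigenvalue $n-2k$; since the eigenspaces of that operator are precisely the Fourier levels $W_j=\operatorname{span}\{\chi_S:|S|=j\}$ (with eigenvalue $n-2j$), this says exactly that $\widehat f$ is supported on level $k$. Two further reductions are convenient. First, the map $f\mapsto \chi_{[n]}f$ sends level-$k$-supported functions to level-$(n-k)$-supported ones and intertwines the isomorphism action up to global signs, so $G(n,k)=G(n,n-k)$; hence we may assume $k=m=\min(k,n-k)$, so $n\ge 2m$. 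Second, a $k$-function on $Q_r$ lifts (by ignoring extra coordinates) to one on $Q_n$ for every $n\ge r$, and an isomorphism restricts to an isomorphism of the cores on the relevant coordinates, so $G(\cdot,k)$ is non-decreasing in $n$; thus both bounds reduce to studying $Q_{2m}$.

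For the upper bound the point is that a $k$-function depends on few coordinates. I would use the elementary fact that a nonzero real polynomial of degree $\le d$ on the cube is nonzero with probability at least $2^{-d}$: induct on the number of variables, writing $p=a+x_nq$ with $\deg q\le d-1$ and $q\ne 0$ when $p$ depends on $x_n$, and noting that wherever $q\ne 0$ at least one of the two extensions of $x'$ has $p\ne 0$. Now if a $k$-function $f$ depends on coordinate $i$, its discrete derivative $D_if$ is a nonzero $\{-1,0,1\}$-valued function supported on level $k-1$, so $(D_if)^2$ is a nonzero $\{0,1\}$-valued polynomial of degree $\le 2(k-1)$, whence $\operatorname{Inf}_i(f)=\mathbb E[(D_if)^2]\ge 4^{-(k-1)}$. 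Since $\sum_i\operatorname{Inf}_i(f)=\sum_S|S|\widehat f(S)^2=k$ (all Fourier weight sits on level $k$ and $\sum_S\widehat f(S)^2=1$), $f$ depends on at most $k\,4^{k-1}$ coordinates. Applying a coordinate permutation we may assume these lie among the first $k\,4^{k-1}$ coordinates, so $f$ is determined by a truth table of size $2^{k\,4^{k-1}}$; hence $G(n,k)\le 2^{2^{k\,4^{k-1}}}$, and $\log_2\log_2 G(n,k)\le 2(k-1)+\log_2 k=2m+O(\log_2 m)$.

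For the lower bound I would build many level-$m$ Boolean functions on $Q_{2m}$ by a recursive ``multiplexer'' construction. If $g_0,g_1$ are level-$(d-1)$ Boolean functions on a variable set $V$ and $i,j\notin V$, put
$$g:=\tfrac{1+x_ix_j}{2}\,x_ig_0+\tfrac{1-x_ix_j}{2}\,x_ig_1=\tfrac12 x_i(g_0+g_1)+\tfrac12 x_j(g_0-g_1).$$
Then $g$ equals $x_ig_0$ when $x_i=x_j$ and $x_ig_1$ otherwise, so $g$ is Boolean, and the second form exhibits $g$ as a nonzero sum of degree-$d$ monomials, so $g$ is level-$d$. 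Iterating this $m$ times --- at depth $\ell$ using the pair $\{2\ell+1,2\ell+2\}$ at every node, with constants $\pm1$ at the $2^m$ leaves --- produces a level-$m$ Boolean function on $Q_{2m}$ for each of the $2^{2^m}$ sign assignments, and these functions are pairwise distinct because the sign at the leaf indexed by $p\in\{0,1\}^m$ is recovered as the value of the function at the single point whose $(2\ell+1)$-st coordinate is $1$ and $(2\ell+2)$-nd coordinate is $(-1)^{p_\ell}$ for each $\ell$. Hence $F(2m,m)\ge 2^{2^m}$, and since $|\operatorname{Aut}(Q_{2m})|=2^{2m}(2m)!=2^{O(m\log m)}$ we get $G(n,k)\ge G(2m,m)\ge 2^{2^m}/2^{O(m\log m)}$, i.e.\ $\log_2\log_2 G(n,k)\ge m-o(1)$.

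I expect the main obstacle to be the lower-bound construction: naive tensor/product constructions only give a recursion of the shape $G(n,k)\gtrsim G(n,\lfloor k/2\rfloor)^2$, which is far too weak (it yields just $2^{2^{O(\log k)}}$), so one genuinely needs a single level-$m$ Boolean function carrying $\approx 2^m$ free bits. The multiplexer identity above is what makes this possible, and the delicate point is precisely that the ``selector'' variables $x_i,x_j$ are absorbed (via $x_i^2=x_j^2=1$) so that the Fourier degree stays exactly $m$ rather than growing to $m+1$.
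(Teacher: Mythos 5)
Your lower bound is essentially the paper's own argument: your multiplexer identity is exactly the paper's construction $h=\left(\frac{f+g}{2}\right)x_{n+1}+\left(\frac{f-g}{2}\right)x_{n+2}$, iterated from the constants $\pm 1$ to give $F(2m,m)\ge 2^{2^m}$, after which you divide by $2\cdot 2^{2m}(2m)!$ and use monotonicity of $G(\cdot,k)$ in $n$ together with the symmetry $G(n,k)=G(n,n-k)$; that part is correct and matches the paper.

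The upper bound, however, has a genuine gap. From ``$f$ has at most $R=k4^{k-1}$ relevant coordinates'' and counting truth tables you get $G(n,k)\le 2^{2^{R}}$, which gives $\log_2\log_2 G(n,k)\le R=k4^{k-1}=2^{2m+O(\log_2 m)}$, \emph{not} $\log_2\log_2 G(n,k)\le \log_2 R=2(k-1)+\log_2 k$ as you wrote: you have taken one logarithm too many. The bound $2^{2^{k4^{k-1}}}$ is triple exponential in $m$, and the defect is not fixable by sharpening the relevant-variable count (even with the optimal $O(2^k)$ bound, truth-table counting only yields $2^{2^{2^{m+O(1)}}}$). To reach the stated double exponential one must count $k$-functions more finely than by truth tables, and this is the paper's key step: a $k$-function is determined by its level-$k$ Fourier coefficients; these lie in $\frac{1}{2^{k-1}}\mathbb{Z}$ (Exercise 1.11(b) in O'Donnell), and Parseval forces $\sum_{|S|=k}\left(2^{k-1}\hat f(S)\right)^2=4^{k-1}$. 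Hence $F(n(k),k)$ is at most the number of integer vectors of length $t=\binom{n(k)}{k}$ whose squares sum to $q=4^{k-1}$, which is at most $\binom{t}{q}\left(2\sqrt{q}+1\right)^q=2^{q\log_2 t+O(q\log_2 q)}=2^{2^{2m+O(\log_2 m)}}$ once $t$ is controlled by a bound on the number of relevant variables (Wellens' theorem in the paper; your influence argument, which is a Nisan--Szegedy-type bound, would also serve for this purpose). So your derivative/influence argument is a reasonable substitute for that citation, but the decisive counting idea --- integrality of the Fourier coefficients plus Parseval, i.e.\ counting lattice points on a sphere rather than truth tables --- is missing, and without it the claimed upper bound does not follow.
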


We also prove the following theorem, which is a key ingredient in the proof of Theorem \ref{est.}:

\begin{theorem}\label{stab.}
Let $k \geq 0$ be an integer. Then the sequence $(G(n,k))_{n=k}^{\infty}$ is increasing and eventually constant. Moreover, denoting by n(k) the first value of $n$ after which the sequence is constant, we have 

$$ 3 \cdot 2^{k-1} - 2 \leq n(k) \leq 4.394 \cdot 2^k . $$

\end{theorem}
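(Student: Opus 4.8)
Let me think about how to prove this theorem about $k$-functions on $Q_n$.

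First, the monotonicity: $(G(n,k))$ is increasing. The natural idea is that a $k$-function on $Q_n$ should give rise to a $k$-function on $Q_{n+1}$. Given $f$ on $Q_n$, define $\tilde f$ on $Q_{n+1} = Q_n \times \{0,1\}$ by... hmm, I need every vertex to have exactly $k$ disagreeing neighbors. If I set $\tilde f(x,0) = f(x)$ and $\tilde f(x,1) = -f(x)$... no wait, then $(x,0)$ has its $Q_n$-neighbors contributing $k$ disagreements, plus the edge to $(x,1)$ which disagrees, giving $k+1$. Instead set $\tilde f(x,1) = f(x)$: then $(x,0)$ has $k$ disagreements within its copy and $0$ across, total $k$. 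Good. So duplication works and gives an injection on isomorphism classes (need to check well-definedness: isomorphic $f$'s give isomorphic $\tilde f$'s, which should follow since automorphisms of $Q_n$ extend). So $G(n,k) \le G(n+1,k)$.

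For eventual constancy and the upper bound $n(k) \le 4.394 \cdot 2^k$, the key is a structural result: every $k$-function on $Q_n$ for large $n$ is a "cylinder", i.e. only depends on boundedly many coordinates, so it "comes from" a $k$-function on some $Q_m$ with $m$ bounded in terms of $k$. This is where the Fourier decomposition enters. The plan is: given a $k$-function $f$ on $Q_n$, consider its Fourier expansion $f = \sum_S \hat f(S) \chi_S$. The condition that $f$ is a $k$-function translates into a statement about $\sum_i (\text{discrete derivative in direction } i)$, equivalently about $\sum_i \partial_i f$ or about the function $\sum_i x_i f(x) \cdot (\ldots)$. Concretely, the number of disagreeing neighbors of $v$ is $\frac{n - \sum_{i} f(v)f(v \oplus e_i)}{2}$, so $f$ being a $k$-function means $\sum_i f(v) f(v\oplus e_i) = n - 2k$ for all $v$. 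Setting $g = \sum_i (f \cdot f^{\oplus e_i})$ — wait, better: $\sum_i T_i f$ where $T_i$ shifts — one shows the relevant quantity has a bounded-degree / bounded-support Fourier expansion, forcing $f$ itself to depend on few coordinates once $n$ is large. The hard part will be extracting the exact constant $4.394$ (presumably $4.394 \approx$ something like $e \cdot \text{const}$ or arises from an optimization), and showing the lower bound $n(k) \ge 3\cdot 2^{k-1} - 2$ by exhibiting a $k$-function on $Q_{3 \cdot 2^{k-1} - 3}$ that genuinely depends on all coordinates (or a new isomorphism class appearing at exactly that stage), likely built recursively — e.g. a clever product/iterated construction where a $(k-1)$-function on $Q_{m}$ is combined with a small gadget to produce a $k$-function on $Q_{2m + c}$, giving the doubling.

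I would organize it as: (1) prove the duplication injection for monotonicity; (2) prove the Fourier-analytic structure theorem that for $n$ large relative to $k$, every $k$-function depends on at most $N(k)$ coordinates, with an explicit $N(k) \le 4.394\cdot 2^k$ — this makes the sequence constant from $n(k)$ onward since all isomorphism classes are already "present" at level $N(k)$; (3) for the lower bound on $n(k)$, construct an explicit family showing a new class appears at level $3\cdot 2^{k-1}-2$. I expect step (2), specifically pinning down the constant and the threshold, to be the main obstacle: one must control how the Fourier support of $f$ can spread, probably via an inductive argument on $k$ that relates $k$-functions to $(k-1)$-functions on subcubes (restricting $f$ to a facet where it behaves like a $(k-1)$- or $k$-function), and the recursion for the bound on the number of relevant coordinates will be roughly $N(k) \approx 2N(k-1) + O(1)$, integrating to the $2^k$ growth with the constant $4.394$ coming from summing the $O(1)$ correction terms geometrically.
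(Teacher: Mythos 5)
Your outline matches the paper's at the structural level: monotonicity via extending a $k$-function by an irrelevant coordinate, eventual constancy via ``every $k$-function depends on boundedly many coordinates,'' the upper bound on $n(k)$ from a bound on the number of relevant coordinates, and the lower bound from a recursive construction depending on many coordinates. But the two quantitative ingredients that carry the theorem are left as sketches, and they are exactly where the content lies. The paper does not prove them either --- it \emph{cites} them: the bound of $4.394\cdot 2^k$ on the number of relevant variables of a degree-$k$ Boolean function is Wellens' Theorem 1.1, and the existence of degree-$k$ Boolean functions with $3\cdot 2^{k-1}-2$ relevant variables is the recursive construction of Chiarelli, Hatami and Saks. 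Your proposed route to the upper bound (``the recursion will be roughly $N(k)\approx 2N(k-1)+O(1)$'') is not a proof and would not come easily: the standard influence argument of Nisan--Szegedy only gives $k\,2^{k-1}$ relevant variables, and improving this to $O(2^k)$, let alone with the constant $4.394$, is the substance of two research papers. Similarly, for the lower bound you name the right recursion ($N(k)=2N(k-1)+2$, which indeed solves to $3\cdot2^{k-1}-2$), and the paper's Lemma \ref{Construction} is the relevant gadget, but you do not exhibit the construction or verify that the resulting function genuinely has all those coordinates relevant, which is what forces $G$ to still be growing at $n=3\cdot2^{k-1}-3$.

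A smaller but real gap is in the monotonicity step. You check that isomorphic $f$'s yield isomorphic extensions (well-definedness), but the inequality $G(n,k)\le G(n+1,k)$ needs the converse: if the extensions are isomorphic as functions on $Q_{n+1}$ --- where the automorphism is allowed to swap the new coordinate with old ones and the sign flip is allowed --- then the original functions were already isomorphic on $Q_n$. This is the nontrivial direction of the paper's Lemma \ref{m,n isom.}, proved by comparing the sets of relevant indices of the two functions and restricting the permutation accordingly; your proposal does not address it. The same lemma is also what makes your ``all isomorphism classes are already present at level $N(k)$'' step precise, i.e.\ the equivalence between $G(n,k)$ being constant for $n\ge N$ and every $k$-function having at most $N$ relevant indices.
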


The paper is organised as follows. In Section \ref{prel.} we introduce some definitions and notation, describe the automorphisms of $Q_n$ and recall some basic facts about Fourier analysis on the Boolean hypercube. In Section \ref{res.} we prove Theorems \ref{est.} and \ref{stab.}.   \newline

\section{Preliminaries}\label{prel.}
In this section we introduce some definitions and notation, describe the automorphisms of $Q_n$ and recall some basic facts about Fourier analysis on the Boolean hypercube.

\subsection{The Boolean hypercube} 
We first introduce some definitions and notation that we will need later on. It will be convenient to think of $Q_n$ as having vertex set $V(Q_n)=\{-1,1\}^n$. The edge set $E(Q_n)$ is the set of pairs of vectors differing in precisely one entry. We will sometimes write $Q_S$, where $S$ is a finite set, for the Boolean hypercube indexed by $S$ (so $V(Q_n)=\{-1,1\}^S$ and $E(Q_n)$ is as before). One can think of $Q_n$ as $Q_{[n]}$, where $[n]=\{1,2,3,\cdots,n\}$. We will write vectors $x \in Q_n$ as $x=(x_1, x_2, x_3, \cdots, x_n)$, so that $x_i \in \{-1,1\}$ for all $i \in [n]$. \newline

For each $x \in Q_n$, we write $\Gamma(x) = \{y \in Q_n : xy \in E(Q_n)\}$ for the neighbourhood of $x$. Let $f$ be a real-valued function on $Q_S$. We say that an index $i \in S$ is \emph{irrelevant} if the value of $f(x)$ does not depend on the value of $x_i$. Otherwise, we say that $i$ is \emph{relevant}. Given a finite set $T \supseteq S$, we can think of $f$ as a function on $Q_T$ for which all the indices in $T \setminus S$ are irrelevant. Conversely, a real-valued function on $Q_T$ for which all the indices in $T \setminus S$ are irrelevant can be thought of as a function on $Q_S$. \newline      

We now describe the automorphisms of $Q_n$. For each $\alpha \in Q_n$ there is an automorphism of $Q_n$, which we will also denote by $\alpha$, given by $\alpha(x)_i=\alpha_i x_i$ for all $x \in Q_n$ and $i \in [n]$. Let $S_n$ be the set of permutations of $[n]$. For each $\sigma \in S_n$ there is an automorphism of $Q_n$, which we will also denote by  $\sigma$, given by $\sigma(x)_i=x_{\sigma(i)}$ for all $x \in Q_n$ and $i \in [n]$. It is well known that any automorphism $\phi$ of $Q_n$ can be written uniquely as $\phi = \alpha \circ \sigma$ with $\alpha \in Q_n$ and $\sigma \in S_n$. \newline

In particular, there are $2^n n!$ automorphisms of $Q_n$. Hence, since there are $2$ signs, every isomorphism class of $k$-functions on $Q_n$ has size at least $1$ and at most $2^{n+1} n! $ , which gives the following lemma. 

\begin{lemma}\label{F,G Lemma}
Let $0 \leq k \leq n$ be integers. Then

$$ \frac{F(n,k)}{2^{n+1}n!} \leq G(n,k) \leq F(n,k) . $$
\end{lemma}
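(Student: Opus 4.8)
The plan is a standard double-counting argument based on the fact that the isomorphism classes of $k$-functions on $Q_n$ partition the set of all $k$-functions on $Q_n$. First I would observe that isomorphism is an equivalence relation on real-valued functions on $Q_n$: it is precisely the orbit relation of the group $\{-1,1\} \times \mathrm{Aut}(Q_n)$ acting by $(\epsilon,\phi)\cdot f = \epsilon\, (f \circ \phi)$, which is a genuine group action once one checks the standard compatibilities (composition of automorphisms, multiplication of signs). By the remark preceding the lemma, being a $k$-function is preserved under isomorphism, so this action restricts to the set $X$ of $k$-functions on $Q_n$, and the isomorphism classes of $k$-functions are exactly the orbits. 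Here $|X| = F(n,k)$, the number of orbits is $G(n,k)$, and by the description of the automorphisms of $Q_n$ given above the acting group has order $2 \cdot 2^n n! = 2^{n+1} n!$.

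For the left-hand inequality, each of the $G(n,k)$ orbits has size at most the order $2^{n+1}n!$ of the acting group, since the orbit of $f$ is the image of the orbit map from the group into $X$. Summing these sizes gives $F(n,k) = |X| \le 2^{n+1} n! \cdot G(n,k)$, which rearranges to $F(n,k)/(2^{n+1}n!) \le G(n,k)$.

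For the right-hand inequality, each of the $G(n,k)$ orbits is non-empty, so $X$ contains at least $G(n,k)$ elements, giving $G(n,k) \le F(n,k)$.

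The argument is completely routine and I do not expect any genuine obstacle: the only facts one has to verify are that isomorphism is the orbit relation of a group action (a one-line check) and that $X$ is invariant under that action (exactly the remark already made in the excerpt). In fact the reasoning has essentially been carried out in the sentence preceding the lemma statement, so the formal proof amounts to writing it out.
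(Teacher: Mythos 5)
Your proposal is correct and is essentially the paper's own argument: the paper notes, in the sentence preceding the lemma, that each isomorphism class has size at least $1$ and at most $2^{n+1} n!$ (the order of the group generated by the $2^n n!$ automorphisms together with the sign), and the two inequalities follow by summing orbit sizes exactly as you describe. Your write-up just makes the orbit-counting explicit; there is no substantive difference.
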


We will also need the following easy lemma later on. 

\begin{lemma}\label{m,n isom.}
Let $0 \leq m \leq n$ be integers and let $f$ and $g$ be real-valued functions on $Q_m$. Then $f$ and $g$ are isomorphic when thought of as functions on $Q_m$ if and only if they are isomorphic when thought of as functions on $Q_n$. 
\end{lemma}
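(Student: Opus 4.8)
The plan is to prove both directions directly, exploiting the explicit description of automorphisms of $Q_n$ as compositions $\alpha \circ \sigma$ with $\alpha \in Q_n$ and $\sigma \in S_n$, together with the fact that $f$ and $g$, viewed on $Q_n$, have all indices in $[n] \setminus [m]$ irrelevant.

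The easy direction is the forward one: if $f = \epsilon g \circ \phi$ for some automorphism $\phi$ of $Q_m$ and sign $\epsilon$, then extending $\phi$ to an automorphism $\widetilde{\phi}$ of $Q_n$ that fixes every coordinate in $[n] \setminus [m]$ (i.e.\ take $\widetilde{\alpha}$ agreeing with $\alpha$ on $[m]$ and equal to $1$ elsewhere, and $\widetilde{\sigma}$ agreeing with $\sigma$ on $[m]$ and fixing $[n]\setminus[m]$ pointwise) gives $f = \epsilon g \circ \widetilde{\phi}$ as functions on $Q_n$, since $f$ and $g$ do not depend on the extra coordinates. So $f$ and $g$ are isomorphic on $Q_n$.

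For the converse, suppose $f = \epsilon\, g \circ \phi$ on $Q_n$ with $\phi = \alpha \circ \sigma$. The key observation is that because $f$ and $g$ have relevant index sets contained in $[m]$, the permutation $\sigma$ must map the relevant coordinates of $g$ to relevant coordinates of $f$: more precisely, if $i \in [n]\setminus[m]$ then $i$ is irrelevant for $f$, which forces $\sigma^{-1}(i)$ (or $\sigma(i)$, depending on the convention in the definition $\sigma(x)_i = x_{\sigma(i)}$) to be irrelevant for $g$, hence to lie in $[n]\setminus[m]$. Thus $\sigma$ maps $[n]\setminus[m]$ into itself and therefore restricts to a permutation of $[m]$ as well. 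Similarly, the sign flips $\alpha_i$ for $i \in [n]\setminus[m]$ act on irrelevant coordinates and so can be discarded without changing the function. Replacing $\alpha$ by its restriction $\alpha|_{[m]}$ (padded with $1$'s, which we then drop) and $\sigma$ by $\sigma|_{[m]}$ yields an automorphism $\psi$ of $Q_m$ with $f = \epsilon\, g \circ \psi$ on $Q_m$.

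The main thing to be careful about is the bookkeeping around irrelevance: one must verify cleanly that if a coordinate is irrelevant for $f = \epsilon g \circ \phi$ then the corresponding coordinate (under $\sigma$) is irrelevant for $g$, and that altering an automorphism only on irrelevant coordinates does not change the composition as a function. Both are routine once the definitions of $\alpha$ and $\sigma$ acting on $Q_n$ are unwound, but getting the direction of $\sigma$ versus $\sigma^{-1}$ right requires matching the paper's convention $\sigma(x)_i = x_{\sigma(i)}$. I expect no real obstacle here; the lemma is genuinely ``easy'' as advertised, and the only risk is an index-chasing slip.
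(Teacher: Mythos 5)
Your forward direction coincides with the paper's argument and is fine. In the converse direction, however, there is a genuine gap: you claim that if a coordinate is irrelevant for $g$ then it must lie in $[n]\setminus[m]$, and from this you conclude that $\sigma$ maps $[n]\setminus[m]$ into itself and hence restricts to a permutation of $[m]$. The first claim is false: nothing forces every index of $[m]$ to be relevant for $g$, so $g$ may have irrelevant coordinates inside $[m]$, and $\sigma$ may exchange such a coordinate with one outside $[m]$. Concretely, take $m=2$, $n=3$, $f=g=x_2$ and $\sigma$ the transposition $(1\,3)$ (so $\sigma(2)=2$); then $f=g\circ\sigma$ as functions on $Q_3$, but $\sigma$ sends $3\in[n]\setminus[m]$ to $1$ and sends $1$ to $3$, so $\sigma|_{[m]}$ is not a permutation of $[m]$ and the automorphism $\psi$ you propose to build by restriction does not exist as described.

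What the irrelevance bookkeeping actually gives is only that $\sigma$ maps the relevant set $T\subseteq[m]$ of $g$ bijectively onto the relevant set $S\subseteq[m]$ of $f$ (with the paper's convention, $\sigma(T)=S$). The paper's proof then takes $\beta=\alpha|_{[m]}$ and lets $\tau\in S_m$ be \emph{any} permutation of $[m]$ agreeing with $\sigma$ on $T$, which exists because the bijection $T\to S$ given by $\sigma$ can be extended arbitrarily to $[m]$; one checks $f=\epsilon\, g\circ\beta\circ\tau$ on $Q_m$, since only the values of $\tau$ and $\beta$ on $T$ affect the composition. So your argument is close and the repair is small, but as written the step ``irrelevant for $g$, hence in $[n]\setminus[m]$'' is where it breaks, and it is not a mere index-chasing slip: restricting $\sigma$ to $[m]$ is the wrong construction, and one must instead modify $\sigma$ off the relevant set.
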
 

\begin{proof}

Suppose $f$ and $g$ are isomorphic when thought of as functions on $Q_m$, say $f = \epsilon g \circ \alpha \circ \sigma$, where $\epsilon \in \{-1,1\}$, $\alpha \in Q_m$ and $\sigma \in S_m$. Let $\beta \in Q_n$ and $\tau \in S_n$ be given by

\begin{align*}
\beta_i =  \begin{cases} \alpha_i & \text{for } i \in [m] \\ 1 & \text{for } i \notin [m]  \end{cases} && \text{and} && \tau(i) =  \begin{cases} \sigma(i) & \text{for } i \in [m] \\ i & \text{for } i \notin [m] \\ \end{cases} \ . 
\end{align*}

Then, when $f$ and $g$ are thought of as functions on $Q_n$, we have $f = \epsilon g \circ \beta \circ \tau$, so $f$ and $g$ are isomorphic when thought of as functions on $Q_n$. \newline

Suppose $f$ and $g$ are isomorphic when thought of as functions on $Q_n$, say $f = \epsilon g \circ \alpha \circ \sigma$, where $\epsilon \in \{-1,1\}$, $\alpha \in Q_n$ and $\sigma \in S_n$. Let $\beta \in Q_m$ be given by $\beta_i = \alpha_i$ for all $i \in [m]$. Let $S,T \subseteq [m]$ be the sets of relevant indices of $f$ and $g$, respectively. Then, by considering the set of relevant indices of $f = \epsilon g \circ \alpha \circ \sigma$, we see that $\sigma(T)=S$. Let $\tau \in S_m$ be any permutation such that $\tau(i)=\sigma(i)$ for all $i \in T$. Then, when $f$ and $g$ are thought of as functions on $Q_m$, we have $f = \epsilon g \circ \beta \circ \tau$, so $f$ and $g$ are isomorphic when thought of as functions on $Q_m$.    

\end{proof}

\subsection{Fourier analysis on the Boolean hypercube}
We now recall some basic facts about Fourier analysis on the Boolean hypercube. For a comprehensive treatment see \cite{Reference 2}. Let $V$ be the vector space of real-valued functions on $Q_n$. For each subset $S \subseteq [n]$, let $\chi_{S} \in V$ be the function given by

$$ \chi_{S} (x) = \prod_{i \in S} x_i .$$

The $\chi_{S}$ form a basis of $V$, so any $f \in V$ can be written uniquely as 

$$ f = \sum_{S \subseteq [n]} \hat{f}(S) \chi_{S} ,$$  

where the $\hat{f}(S)$ are the \emph{Fourier coefficients} of $f$. The function $\hat{f}$ mapping each $S \subseteq [n]$ to its Fourier coefficient $\hat{f}(S)$ is known as the \emph{Fourier transform} of $f$ and this decomposition is known as the \emph{Fourier decomposition}. \newline

For each integer $0 \leq k \leq n$, let $V_k$ be the subspace of $V$ spanned by the $\chi_S$ with $|S|=k$. We will need the following two basic facts later on, so we state them here as separate lemmas.

\begin{lemma}[Parseval's Theorem in \cite{Reference 2}]\label{boolean square sum 1}
Let $f$ be a Boolean function on $Q_n$. Then 

$$ \sum_{S \subseteq [n]} \hat{f}(S)^2 = 1 . $$ 

\end{lemma}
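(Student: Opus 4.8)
The plan is to deduce this from the orthonormality of the Fourier characters with respect to the natural inner product on $V$. First I would equip $V$ with the inner product $\langle f, g \rangle = 2^{-n} \sum_{x \in Q_n} f(x) g(x)$, which is the expectation of $f(x)g(x)$ when $x$ is chosen uniformly at random from $Q_n$.

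The main step is to check that the basis $\{\chi_S : S \subseteq [n]\}$ is orthonormal with respect to this inner product. Given $S, T \subseteq [n]$, the identity $x_i^2 = 1$ for $x \in Q_n$ gives $\chi_S(x)\chi_T(x) = \prod_{i \in S \triangle T} x_i = \chi_{S \triangle T}(x)$, so $\langle \chi_S, \chi_T \rangle = 2^{-n} \sum_{x \in Q_n} \chi_{S \triangle T}(x)$. If $S = T$ this equals $2^{-n} \sum_x 1 = 1$. If $S \neq T$, fix an index $j \in S \triangle T$; the map sending $x$ to the vector obtained by negating $x_j$ is an involution on $Q_n$ that flips the sign of $\chi_{S \triangle T}(x)$, so the terms cancel in pairs and the sum is $0$. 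Hence $\langle \chi_S, \chi_T \rangle$ equals $1$ if $S = T$ and $0$ otherwise, and since the $\chi_S$ already span $V$, they form an orthonormal basis.

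Writing $f = \sum_{S} \hat{f}(S) \chi_S$ and expanding $\langle f, f \rangle$ by bilinearity, orthonormality collapses the resulting double sum to $\langle f, f \rangle = \sum_{S \subseteq [n]} \hat{f}(S)^2$. Finally, since $f$ is a Boolean function we have $f(x)^2 = 1$ for every $x \in Q_n$, so $\langle f, f \rangle = 2^{-n} \sum_{x} f(x)^2 = 1$, which gives the claimed identity. I do not expect any genuine obstacle here — this is a standard fact, proved in full in \cite{Reference 2} — and the only point needing a moment's care is the sign-cancellation argument for orthogonality of distinct characters, which is handled by the single-coordinate-flip involution.
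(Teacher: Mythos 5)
Your proof is correct and is the standard argument: orthonormality of the characters $\chi_S$ via the coordinate-flip cancellation, expansion of $\langle f,f\rangle$, and the observation that $f^2 \equiv 1$ for Boolean $f$. The paper does not prove this lemma itself but simply cites it as Parseval's Theorem from \cite{Reference 2}, and your argument is precisely the proof given there, so there is nothing to add.
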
 

\begin{lemma}[Exercise 1.11(b) in \cite{Reference 2}]\label{k-function coefficients}
Let $f \in V_k$ be a Boolean function, where $k \geq 1$. Then $\hat{f}$ is $\frac{1}{2^{k-1}} \mathbb{Z}$-valued.  
\end{lemma}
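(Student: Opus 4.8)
The plan is to reduce the statement to a computation on a single $k$-dimensional subcube. If $|S| \neq k$ then $\hat f(S) = 0$ because $f \in V_k$, so there is nothing to prove; hence I would fix a set $S \subseteq [n]$ with $|S| = k$ and argue that $\hat f(S) \in \frac{1}{2^{k-1}}\mathbb{Z}$.

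First I would restrict $f$ to the subcube in which every coordinate outside $S$ is frozen to $1$: define a Boolean function $g : Q_S \to \{-1,1\}$ by letting $g(y)$ be the value of $f$ at the point of $Q_n$ whose $S$-coordinates are given by $y$ and whose remaining coordinates are all equal to $1$. Thus $g$ is a Boolean function on the hypercube $Q_S$ indexed by the $k$-element set $S$. The key step is to identify $\hat g(S)$ with $\hat f(S)$. Writing $f = \sum_{U \subseteq [n]} \hat f(U) \chi_U$ and substituting the value $1$ for every coordinate outside $S$, each character $\chi_U$ becomes $\chi_{U \cap S}$ as a function on $Q_S$, so $\hat g(T) = \sum_{U : U \cap S = T} \hat f(U)$ for every $T \subseteq S$. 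Taking $T = S$, the sets $U$ with $U \cap S = S$ are exactly those containing $S$, and among these the homogeneity of $f$ (every $U$ with $\hat f(U) \neq 0$ has $|U| = k = |S|$) forces $U = S$; hence $\hat g(S) = \hat f(S)$.

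It then remains to observe that $\hat g(S) \in \frac{1}{2^{k-1}}\mathbb{Z}$. By definition $\hat g(S) = \frac{1}{2^{k}} \sum_{y \in Q_S} g(y)\chi_S(y)$, which is $\frac{1}{2^k}$ times a sum of $2^k$ terms each equal to $\pm 1$; since $k \geq 1$ the number of terms is even, so the sum is an even integer and $\hat g(S) \in \frac{1}{2^{k-1}}\mathbb{Z}$, completing the argument.

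I expect the only non-routine point to be the isolation identity $\hat g(S) = \hat f(S)$ — namely recognising that passing to the subcube on $S$ destroys all Fourier mass except that on $S$ itself, precisely because $f$ is homogeneous of degree $|S|$. (The same computation in fact shows $\hat g(S) = \hat f(S)$ whatever constants are substituted for the coordinates outside $S$.) Everything else is a one-line parity count, so no serious obstacle is anticipated.
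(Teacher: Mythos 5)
Your proof is correct. There is, however, no in-paper argument to compare it against: the paper states this lemma with a citation (Exercise 1.11(b) of O'Donnell's book) and gives no proof of its own. Your argument is sound and self-contained. The restriction identity $\hat g(T)=\sum_{U\colon U\cap S=T}\hat f(U)$ is exactly what one gets by fixing the coordinates outside $S$ to $1$; since $f\in V_k$ and $|S|=k$, the only $U\supseteq S$ with $\hat f(U)\neq 0$ is $U=S$, so $\hat g(S)=\hat f(S)$; and the parity count is right, since $\hat g(S)$ is $2^{-k}$ times a sum of $2^k$ values in $\{-1,1\}$, which is even because $2^k$ is even for $k\geq 1$. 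One remark: the cited exercise establishes the stronger ``granularity'' fact that \emph{every} Boolean function of degree at most $k$ has all Fourier coefficients in $\frac{1}{2^{k-1}}\mathbb{Z}$ (usually proved by induction or by restricting to maximal nonzero sets), whereas your restriction trick as written only controls coefficients of sets of size exactly $k$. That is enough here precisely because $f\in V_k$ makes all other coefficients vanish, so your proof fully covers the lemma as used in the paper, though not the general statement of the exercise.
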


\section{Results}\label{res.}

In this section we prove Theorems \ref{est.} and \ref{stab.}. An outline of the proof is as follows. We first obtain a criterion for a Boolean function to be a $k$-function in terms of its Fourier decomposition which will be used throughout the rest of the paper. We then prove Theorem \ref{stab.}. Next, we  prove a symmetry of $F(n,k)$ and $G(n,k)$ which explains the appearance of $m=min(k,n-k)$ in Theorem \ref{est.}. \newline

We then show how to obtain a $(k+1)$-function on $Q_{n+2}$ given a pair of $k$-functions on $Q_n$, which is the key to proving the lower bound in Theorem \ref{est.}. Next, we introduce a new function which counts the number of ways of writing a non-negative integer as a sum of squares and obtain an upper bound for $F(n,k)$ in terms of this function. We then prove an upper bound for this new function. Finally, we put all our previous results together to prove Theorem \ref{est.}.

\subsection{Criterion for a Boolean function to be a $k$-function}

In \cite{Reference 1}, Gross and Grupel show that a Boolean function $f$ on $Q_n$ is an $n/2$-function if and only if $f \in V_{n/2}$ (Proposition 3.5.). The following lemma uses the same argument to  generalise this result. 

\begin{lemma}\label{Criterion Lemma}
A Boolean function $f$ on $Q_n$ is a $k$-function if and only if $f \in V_k$.
\end{lemma}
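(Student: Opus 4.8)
The plan is to connect the combinatorial condition defining a $k$-function to a condition on the Fourier side via the discrete Laplacian (equivalently, the ``shift by a neighbour'' operator). For a Boolean function $f$ and a vertex $x \in Q_n$, consider the quantity $\sum_{y \in \Gamma(x)} f(x)f(y)$. Since $f$ takes values in $\{-1,1\}$, for each neighbour $y$ we have $f(x)f(y) = 1$ if $f(x)=f(y)$ and $f(x)f(y) = -1$ if $f(x) \neq f(y)$. Hence if $f(x)$ disagrees with exactly $j$ of its $n$ neighbours, then $\sum_{y \in \Gamma(x)} f(x)f(y) = (n-j) - j = n - 2j$. Therefore $f$ is a $k$-function if and only if $\sum_{y \in \Gamma(x)} f(x)f(y) = n-2k$ for every $x$, which (multiplying by $f(x) \in \{-1,1\}$) is equivalent to
$$ \sum_{y \in \Gamma(x)} f(y) = (n-2k)\, f(x) \quad \text{for all } x \in Q_n . $$
In operator language, writing $(Lf)(x) = \sum_{y \in \Gamma(x)} f(y)$ for the adjacency operator of $Q_n$, this says exactly that $f$ is an eigenfunction of $L$ with eigenvalue $n-2k$.

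The second step is to recall (or quickly verify) that the characters $\chi_S$ are eigenfunctions of $L$: for a fixed coordinate $i$, flipping $x_i$ multiplies $\chi_S(x)$ by $-1$ if $i \in S$ and leaves it unchanged if $i \notin S$, so summing over the $n$ neighbours gives $(L\chi_S)(x) = \big((n-|S|) - |S|\big)\chi_S(x) = (n-2|S|)\chi_S(x)$. Thus $V_j$ is precisely the eigenspace of $L$ for the eigenvalue $n-2j$, and these eigenspaces for $j = 0,1,\dots,n$ are distinct (the eigenvalues $n-2j$ are distinct) and span $V$. Consequently, a function $f$ satisfies $Lf = (n-2k)f$ if and only if $f \in V_k$: indeed, writing $f = \sum_j f_j$ with $f_j \in V_j$ the Fourier decomposition grouped by level, $Lf = \sum_j (n-2j) f_j$, and this equals $(n-2k)f = \sum_j (n-2k) f_j$ exactly when $f_j = 0$ for all $j \neq k$.

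Combining the two steps: $f$ is a $k$-function $\iff Lf = (n-2k)f \iff f \in V_k$, which is the claim.

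The only mild subtlety — and the step I would be most careful about — is the degenerate case $k=0$ or $k=n$ (a constant function, resp.\ the parity function $\chi_{[n]}$); these are genuinely $k$-functions and indeed lie in $V_0$ and $V_n$ respectively, so the argument goes through uniformly, but it is worth noting that no $k$-function exists unless $V_k$ actually contains a $\{-1,1\}$-valued function, which is a separate existence question not addressed by this equivalence. Beyond that, there is no real obstacle: everything reduces to the observation that the defining local condition is an eigenvalue equation for the adjacency operator and that the level sets $V_k$ are exactly its eigenspaces.
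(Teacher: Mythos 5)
Your proof is correct and follows essentially the same route as the paper: both express the $k$-function condition as the eigenvalue equation $\sum_{y \in \Gamma(x)} f(y) = (n-2k)f(x)$ for the adjacency operator and use that the characters $\chi_S$ diagonalise it with eigenvalue $n-2|S|$, so the eigenspace for $n-2k$ is exactly $V_k$. The only cosmetic difference is that you verify the eigenvalue of $\chi_S$ by a direct coordinate-flip computation, whereas the paper deduces it from the observation that $\chi_S$ is an $|S|$-function.
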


\begin{proof}
Consider the linear map $\alpha: V \to V$ given by 

$$ (\alpha f)(x) = \sum_{y \in \Gamma(x)} f(y) .  $$ 

We claim that a Boolean function $f$ on $Q_n$ is a $k$-function if and only if $\alpha f = (n-2k)f$. To see this, for each $x \in Q_n$, let $k(x)=|\{y \in \Gamma(x) : f(x) \neq f(y)\}|$. Then

$$ (\alpha f)(x) = \sum_{y \in \Gamma(x)} f(y) = (n-k(x))f(x)+k(x)(-f(x)) = (n-2k(x)) f(x)  .$$

Then, by definition, $f$ is a $k$-function if and only if $k(x)=k$ for all $x \in Q_n$, i.e. if and only if $\alpha f=(n-2k)f$. \newline

For each $S \subseteq [n]$, since $\chi_{S}$ is an $|S|$-function, we thus have $\alpha \chi_{S} = (n-2|S|) \chi_{S}$. So the Fourier basis diagonalises $\alpha$. Hence, $f$ is a $k$-function if and only if $\alpha f = (n-2k)f$, which happens if and only if $f \in V_k$.  
      
\end{proof}

Throughout the rest of the paper we will view Lemma~\ref{Criterion Lemma} as the definition of a $k$-function.

\subsection{Proof of Theorem \ref{stab.} }

We now prove Theorem \ref{stab.}.

\begin{theo}
Let $k \geq 0$ be an integer. Then the sequence $(G(n,k))_{n=k}^{\infty}$ is increasing and eventually constant. Moreover, denoting by n(k) the first value of $n$ after which the sequence is constant, we have 

$$ 3 \cdot 2^{k-1} - 2 \leq n(k) \leq 4.394 \cdot 2^k . $$

\end{theo}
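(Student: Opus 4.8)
The plan is to analyse $k$-functions via the Fourier characterisation $f \in V_k$ (Lemma \ref{Criterion Lemma}). The key observation is that a $k$-function on $Q_n$ can be \emph{reduced} to a $k$-function on $Q_{n'}$ with $n' \leq n$ precisely by deleting irrelevant coordinates, and that Lemma \ref{m,n isom.} tells us the isomorphism class is unchanged by this operation. So define the \emph{core} of a $k$-function $f$ to be the function obtained by restricting to its set of relevant coordinates; every isomorphism class of $k$-functions on $Q_n$ contains cores living on $Q_r$ for some $r \leq n$ (with $r$ an isomorphism invariant), and an isomorphism class persists as $n$ grows exactly when $n \geq r$. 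Thus $G(n,k)$ is the number of isomorphism classes of $k$-functions whose core has at most $n$ relevant coordinates, which is manifestly nondecreasing in $n$. To get that it is \emph{increasing} (strictly) until it stabilises, I would show that if there exists a $k$-function with core on exactly $r$ coordinates then there exists one with core on exactly $r-1$ coordinates as long as $r-1 \geq k$ — e.g. by fixing one coordinate to a constant value and checking the result is still a $k$-function with the remaining coordinates all relevant — so the set of attainable core-sizes is an interval $[k, n(k)]$ (the left endpoint being $k$, realised by $\chi_{[k]}$, or rather $r=0$ for the constant $0$-function when $k=0$). Then $n(k)$ is exactly the maximum number of relevant coordinates of any $k$-function, and the sequence is constant from $n(k)$ onward and strictly increasing before.

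For the quantitative bounds, everything reduces to bounding the maximum number $n(k)$ of relevant coordinates of a $k$-function. For the lower bound $n(k) \geq 3 \cdot 2^{k-1} - 2$ I would exhibit an explicit $k$-function on $Q_n$ with $n = 3 \cdot 2^{k-1}-2$ and all coordinates relevant; the natural candidate is built by the doubling construction mentioned in the outline (passing from a pair of $k$-functions on $Q_n$ to a $(k+1)$-function on $Q_{n+2}$), which suggests a recursion $n(k+1) \geq n(k) + \text{something}$; tracking the recursion with base case $k=1$ (where the $1$-functions are the dictators $\pm x_i$, so $n(1)=1$... but one must check the arithmetic: $3\cdot 2^0 - 2 = 1$, consistent) should give the closed form, possibly after also using the $n \leftrightarrow n-k$ symmetry of $F$ and $G$. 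For the upper bound $n(k) \leq 4.394 \cdot 2^k$, I would use Fourier-analytic size constraints: if $f \in V_k$ is Boolean with $n$ relevant coordinates, then by Parseval (Lemma \ref{boolean square sum 1}) the level-$k$ Fourier weight is $1$, while each relevant coordinate $i$ forces some $\hat f(S) \neq 0$ with $i \in S$, and by Lemma \ref{k-function coefficients} every nonzero $\hat f(S)$ has $|\hat f(S)| \geq 2^{-(k-1)}$, hence $\hat f(S)^2 \geq 2^{-2(k-1)} = 4 \cdot 4^{-k}$. Counting: the number of nonzero coefficients is at most $4^{k-1}$, and each relevant coordinate lies in at least one such $S$ while each $S$ has size $k$, so $n \leq k \cdot 4^{k-1}$ — but that is far too weak, so the real argument must be subtler, presumably looking at the restriction of $f$ to a coordinate and an averaging/influence argument showing that relevant coordinates cannot all have tiny influence, with the constant $4.394$ emerging from optimising the resulting inequality.

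The main obstacle I anticipate is the upper bound on $n(k)$: the crude Parseval count gives roughly $4^k$ rather than the claimed $2^k$, so one needs a genuinely better idea — most likely an induction on $k$ via coordinate restriction. Concretely, given a $k$-function $f$ on $Q_n$ and a coordinate $i$, the two restrictions $f|_{x_i=1}$ and $f|_{x_i=-1}$ are Boolean functions on $Q_{n-1}$, and one should be able to show each is (close to) a $k$- or $(k-1)$-function on its relevant coordinates, so that the number of coordinates relevant to $f$ but not to both restrictions is controlled, yielding $n(k) \leq 2\,n(k-1) + c$ for some explicit constant $c$; iterating gives $n(k) \leq C \cdot 2^k$ with $C$ determined by the geometric series, and pinning down $C = 4.394$ is then a matter of being careful about the base case and the additive constant. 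Handling the Boolean/integrality bookkeeping so the restrictions really are $k$- or $(k-1)$-functions (and not merely functions in $V_{\leq k}$) is the delicate point, and is where Lemma \ref{k-function coefficients} and a parity argument on Fourier coefficients should do the work. The strict-monotonicity claim and the lower bound construction I expect to be comparatively routine once the right explicit family of $k$-functions is identified.
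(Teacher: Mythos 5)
Your reduction of the theorem to the maximum number of relevant coordinates of a $k$-function is exactly the paper's route (the Gross--Grupel embedding observation combined with Lemma~\ref{m,n isom.}), and your lower-bound sketch via the doubling construction of Lemma~\ref{Construction} is essentially the recursive construction of Chiarelli, Hatami and Saks, which the paper simply cites; that part is on track. The genuine gap is the upper bound. The quantity $4.394\cdot 2^k$ is precisely Wellens' bound on the number of relevant variables of a degree-$k$ Boolean function, which the paper imports as a black box, and nothing in your sketch comes close to reproving it. Your Parseval count gives only $n\le k\cdot 4^{k-1}$ (and the standard influence argument --- every relevant variable of a degree-$k$ Boolean function has influence at least $2^{1-k}$, total influence at most $k$ --- still only gives $k\cdot 2^{k-1}$). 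Your proposed fix, an induction $n(k)\le 2n(k-1)+c$ via coordinate restrictions, is not substantiated: the restrictions $f|_{x_i=\pm 1}$ of a function in $V_k$ lie in $V_{k-1}\oplus V_k$ and are in general neither $k$- nor $(k-1)$-functions, so the recursion cannot be run inside the class you are bounding, and no argument is given that any recursion of this shape holds, let alone one pinning down the constant $4.394$. As written, the proposal proves eventual constancy and the bound $n(k)\le 4.394\cdot 2^k$ only modulo a theorem it neither proves nor cites.

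A smaller but concrete error: your claim that the attainable numbers of relevant coordinates form an interval, justified by ``fixing one coordinate to a constant value,'' is false, and the suggested justification fails because fixing a coordinate does not preserve membership in $V_k$. For $k=2$, a Boolean function $a x_1x_2+b x_1x_3+c x_2x_3$ satisfies $f^2=(a^2+b^2+c^2)+2ab\,x_2x_3+2ac\,x_1x_3+2bc\,x_1x_2$, so Booleanness forces at most one nonzero coefficient: there is no $2$-function with exactly three relevant coordinates, and correspondingly $G(2,2)=G(3,2)<G(4,2)$, so the sequence is not strictly increasing before it stabilises. Fortunately the theorem only asserts weak monotonicity, and the identity ``$n(k)=$ maximum number of relevant coordinates'' needs no interval property: an isomorphism class attaining the maximum is simply absent from every smaller cube, so the sequence cannot be constant from any earlier point. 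Drop the strictness claim, keep your core/relevant-coordinates framework, and either cite Wellens (and Chiarelli--Hatami--Saks for the matching construction) or accept that the upper bound requires a genuinely harder argument than the one outlined.
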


\begin{proof}

Gross and Grupel note in \cite{Reference 1} that given integers $n \geq m \geq k$ we can think of a $k$-function on $Q_m$ as a $k$-function on $Q_n$ for which all the indices $m < i \leq n$ are irrelevant (Observation 4.4.). Combining this observation with Lemma~\ref{m,n isom.}, we obtain that the sequences $(F(n,k))_{n=k}^{\infty}$ and $(G(n,k))_{n=k}^{\infty}$ are increasing. \newline

Moreover, in light of this observation and Lemma~\ref{m,n isom.}, a moment's thought shows that for all integers $N \geq k$ the following two statements are equivalent:

\begin{itemize}
  \item $G(n,k)=G(N,k)$ for all integers $n \geq N$.
  \item Every $k$-function has at most $N$ relevant indices. 
\end{itemize}

Wellens proved in \cite{Reference 3} that every $k$-function has at most $4.394 \cdot 2^k$ relevant indices (Theorem 1.1.). Hence $(G(n,k))_{n=k}^{\infty}$ is eventually constant and $n(k) \leq 4.394 \cdot 2^k$. In \cite{Reference 4}, Chiarelli, Hatami and Saks recursively construct $k$-functions with $3 \cdot 2^{k-1} - 2$ relevant indices (Theorem 3.1.). Hence $n(k) \geq 3 \cdot 2^{k-1} - 2$.   

\end{proof}

\subsection{Symmetry of $F(n,k)$ and $G(n,k)$}

We now prove a symmetry of $F(n,k)$ and $G(n,k)$.

\begin{lemma}\label{Symmetry Lemma}

Let $0 \leq k \leq n$ be integers. Then $F(n,k)=F(n,n-k)$ and $G(n,k)=G(n,n-k)$.

\end{lemma}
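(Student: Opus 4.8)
The plan is to exhibit an explicit bijection between $k$-functions and $(n-k)$-functions on $Q_n$ that also respects isomorphism, which immediately yields both equalities. The natural candidate is pointwise multiplication by the parity character $\chi_{[n]}$: given a Boolean function $f$ on $Q_n$, define $g = \chi_{[n]} \cdot f$, so that $g(x) = f(x) \prod_{i=1}^n x_i$. Since $\chi_{[n]}(x) \in \{-1,1\}$, the product $g$ is again a Boolean function, and the map $f \mapsto \chi_{[n]} \cdot f$ is its own inverse (because $\chi_{[n]}^2 \equiv 1$), hence a bijection on Boolean functions.

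The key step is to check that this bijection sends $k$-functions to $(n-k)$-functions. Using Lemma~\ref{Criterion Lemma}, $f$ is a $k$-function iff $f \in V_k$, i.e. $\hat f(S) = 0$ whenever $|S| \neq k$. Multiplication by $\chi_{[n]}$ acts on the Fourier basis by $\chi_{[n]} \cdot \chi_S = \chi_{[n] \triangle S} = \chi_{[n] \setminus S}$, so $\widehat{\chi_{[n]} \cdot f}(T) = \hat f([n] \setminus T)$. Therefore $f \in V_k$ if and only if $\chi_{[n]} \cdot f \in V_{n-k}$, i.e. $f$ is a $k$-function iff $g = \chi_{[n]} \cdot f$ is an $(n-k)$-function. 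This gives $F(n,k) = F(n,n-k)$.

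For the statement about $G$, I would verify that the bijection $f \mapsto \chi_{[n]} \cdot f$ carries isomorphism classes to isomorphism classes, so that it descends to a bijection on isomorphism classes. The point is that $\chi_{[n]}$ is invariant under the automorphisms of $Q_n$ in the appropriate sense: for a coordinate permutation $\sigma \in S_n$ we have $\chi_{[n]} \circ \sigma = \chi_{[n]}$, and for a sign flip $\alpha \in Q_n$ we have $\chi_{[n]} \circ \alpha = \chi_{[n]}(\alpha) \cdot \chi_{[n]} = \pm \chi_{[n]}$, where the sign $\chi_{[n]}(\alpha) \in \{-1,1\}$ is a constant. Hence if $f = \epsilon\, g \circ \phi$ with $\phi = \alpha \circ \sigma$, then $\chi_{[n]} \cdot f = \epsilon\, \chi_{[n]} \cdot (g \circ \phi) = \epsilon\, (\chi_{[n]} \circ \phi^{-1})^{-1}\cdots$ — more cleanly, $(\chi_{[n]} \cdot g) \circ \phi = (\chi_{[n]} \circ \phi)(g \circ \phi) = \pm\, \chi_{[n]} \cdot (g \circ \phi)$, so $\chi_{[n]} \cdot f$ and $\chi_{[n]} \cdot g$ are isomorphic with a possibly adjusted sign $\epsilon' \in \{-1,1\}$. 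Since the map is a bijection on Boolean functions and preserves the isomorphism relation in both directions (it is an involution), it induces a bijection on isomorphism classes of $k$-functions and $(n-k)$-functions, giving $G(n,k) = G(n,n-k)$.

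I do not anticipate a serious obstacle here; the only point requiring a little care is the bookkeeping of signs when checking compatibility with the extended notion of isomorphism, and confirming that the relevant-index structure does not interfere (it does not, since multiplication by $\chi_{[n]}$ only permutes Fourier levels and the notion of isomorphism has already been set up to handle signs).
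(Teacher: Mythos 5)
Your proposal is correct and is essentially the paper's own argument: multiplication by $\chi_{[n]}$ as an involution on Boolean functions, swapping $V_k$ and $V_{n-k}$ on the Fourier side, together with the observation that $\chi_{[n]} \circ \phi = \pm \chi_{[n]}$ for any automorphism $\phi$, so the map respects the extended isomorphism. The slightly garbled intermediate expression is fixed by your own ``more cleanly'' computation, which matches the paper's.
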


\begin{proof}
 
Define a linear map $\beta: V \to V$ by $\beta f = \chi_{[n]} f$. Since $\chi_{[n]}^2 = 1$, $\beta^2 = id$, where $id$ is the identity function on $V$. We have $\beta \chi_{S} = \chi_{[n]} \chi_{S} = \chi_{S^C}$ for all $S \subseteq [n]$. Hence $\beta$ swaps $V_k$ and $V_{n-k}$. Note that $\beta f$ is a Boolean function if and only if $f$ is. Hence, by Lemma~\ref{Criterion Lemma}, $\beta$ induces a bijection between $k$-functions and $(n-k)$-functions on $Q_n$. So $F(n,k)=F(n,n-k)$. \newline

To show that $G(n,k)=G(n,n-k)$ it suffices to check that $\beta$ respects isomorphisms. Let $f,g \in V$ be isomorphic, say $f = \epsilon g \circ \phi$, where $\epsilon \in \{-1,1\}$ and $\phi \in Aut(Q_n)$. Then 

$$\beta f = \beta (\epsilon g \circ \phi) = \epsilon \chi_{[n]} (g \circ \phi) = \epsilon (\chi_{[n]} \circ \phi^{-1} \circ \phi) (g \circ \phi) = \epsilon ((\chi_{[n]} \circ \phi^{-1}) g ) \circ \phi \ . $$

But $\chi_{[n]} \circ \phi^{-1} = \pm \chi_{[n]}$, so

$$\beta f = \epsilon ((\chi_{[n]} \circ \phi^{-1}) g ) \circ \phi = \pm (\chi_{[n]} g ) \circ \phi = \pm  (\beta g) \circ \phi , $$

so $\beta f$ is isomorphic to $\beta g$.

\end{proof}

Lemma~\ref{Symmetry Lemma} is the reason we extended the notion of isomorphism. It doesn't hold if we use the definition of Gross and Grupel. For example, 
the only $0$-functions are $\pm 1$ and the only $n$-functions are $\pm \chi_{[n]}$. The first two are not isomorphic under the definition of Gross and Grupel while the last two are (for $n \geq 1$), so under the definition of Gross and Grupel, $G(n,0) = 2 > 1 = G(n,n)$ (for $n \geq 1$). Under our definition of isomorphism the first and last two are isomorphic, so $G(n,0)=1=G(n,n)$ (for all $n$). \newline

In light of Lemma~\ref{Symmetry Lemma}, when proving Theorem \ref{est.}, we may assume that $m=k$, i.e. that $n \geq 2k$.

\subsection{Obtaining a $(k+1)$-function on $Q_{n+2}$ from a pair of $k$-functions on $Q_n$}

The following lemma is the key to proving the lower bound in Theorem \ref{est.}. A similar construction was used in \cite{Reference 4} by Chiarelli, Hatami and Saks to recursively construct $k$-functions with $3 \cdot 2^{k-1}-2$ relevant indices.

\begin{lemma}\label{Construction}
Let $f$ and $g$ be $k$-functions on $Q_n$. Then

$$ h = \left(\frac{f+g}{2} \right) x_{n+1} + \left(\frac{f-g}{2} \right) x_{n+2} $$

is a $(k+1)$-function on $Q_{n+2}$.  
\end{lemma}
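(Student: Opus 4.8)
The plan is to use the Fourier criterion from Lemma~\ref{Criterion Lemma}: since $f$ and $g$ are $k$-functions on $Q_n$, we have $f, g \in V_k$ (with $V_k$ taken over the index set $[n]$), and we must show $h \in V_{k+1}$ (with $V_{k+1}$ taken over $[n+2]$) and that $h$ is Boolean. Write $f = \sum_{|S|=k} \hat f(S)\chi_S$ and $g = \sum_{|S|=k}\hat g(S)\chi_S$. Then $(f+g)/2$ and $(f-g)/2$ both lie in $V_k$ over $[n]$, so multiplying by $x_{n+1}$ respectively $x_{n+2}$ turns each monomial $\chi_S$ with $|S|=k$ into $\chi_{S\cup\{n+1\}}$ respectively $\chi_{S\cup\{n+2\}}$, each of size $k+1$. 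Hence every Fourier coefficient of $h$ is supported on sets of size exactly $k+1$, i.e.\ $h\in V_{k+1}$.

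The remaining and only substantive point is that $h$ is $\{-1,1\}$-valued. The clean way to see this is to evaluate $h$ at a point $x=(x_1,\dots,x_{n+2})$ according to the two sign-bits $x_{n+1},x_{n+2}$, treating $(x_1,\dots,x_n)$ as a point of $Q_n$. There are four cases. If $(x_{n+1},x_{n+2})=(1,1)$ then $h(x) = \tfrac{f+g}{2} + \tfrac{f-g}{2} = f(x_1,\dots,x_n) \in\{-1,1\}$. If $(x_{n+1},x_{n+2})=(1,-1)$ then $h(x) = \tfrac{f+g}{2} - \tfrac{f-g}{2} = g(x_1,\dots,x_n)\in\{-1,1\}$. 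If $(x_{n+1},x_{n+2})=(-1,1)$ then $h(x) = -\tfrac{f+g}{2} + \tfrac{f-g}{2} = -g(x_1,\dots,x_n)\in\{-1,1\}$, and if $(x_{n+1},x_{n+2})=(-1,-1)$ then $h(x) = -\tfrac{f+g}{2} - \tfrac{f-g}{2} = -f(x_1,\dots,x_n)\in\{-1,1\}$. So in all cases $h(x)\in\{-1,1\}$, hence $h$ is a Boolean function.

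Combining the two observations, $h$ is a Boolean function lying in $V_{k+1}$, so by Lemma~\ref{Criterion Lemma} it is a $(k+1)$-function on $Q_{n+2}$, as required. The only place one must be slightly careful is the bookkeeping of index sets: that $f,g$ being functions of $x_1,\dots,x_n$ alone guarantees their Fourier support avoids $n+1$ and $n+2$, so that after multiplying by $x_{n+1}$ or $x_{n+2}$ we genuinely get monomials of size $k+1$ and no cancellation across the two terms occurs. I do not anticipate any real obstacle here; the case analysis for Booleanity is the heart of the argument and is entirely routine, and indeed one could phrase it more compactly by noting that $h = \tfrac{1+x_{n+1}x_{n+2}}{2}(f x_{n+1}) + \tfrac{1-x_{n+1}x_{n+2}}{2}(g x_{n+1})$-type identities, but the four-case evaluation is the most transparent.
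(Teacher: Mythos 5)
Your argument is correct and follows the same route as the paper: verify via Lemma~\ref{Criterion Lemma} that $h$ is Boolean (by the four-case evaluation of $(x_{n+1},x_{n+2})$, giving values $\pm f$, $\pm g$) and that $h \in V_{k+1}$ (since $(f\pm g)/2 \in V_k$ over $[n]$ and multiplication by $x_{n+1}$ or $x_{n+2}$ raises the degree by one). Your extra remark on the bookkeeping of index sets is a fine, slightly more explicit rendering of the same point.
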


\begin{proof}
By Lemma~\ref{Criterion Lemma}, we need to check that $h$ is a Boolean function in $V_{k+1}$. By considering the four possible values for the pair $(x_{n+1},x_{n+2})$, we see that the values obtained by $h$ are those obtained by $\pm f$ and $\pm g$. Since $f$ and $g$ are Boolean functions, so is $h$. Since $f$ and $g$ are in $V_k$, so are $(f+g)/2$ and $(f-g)/2$. Hence $\left((f+g)/2\right)  x_{n+1}$ and $\left((f-g)/2\right)  x_{n+2}$ are in $V_{k+1}$, since $(f+g)/2$ and $(f-g)/2$ are functions on $Q_n$. Hence $h \in V_{k+1}$.  
    
\end{proof}

\begin{cor}\label{Squaring}

Let $0 \leq k \leq n$ be integers. Then $F(n+2,k+1) \geq F(n,k)^2$.

\end{cor}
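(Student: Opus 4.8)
The plan is to use Lemma~\ref{Construction} to manufacture distinct $(k+1)$-functions on $Q_{n+2}$ from ordered pairs of $k$-functions on $Q_n$. Given an ordered pair $(f,g)$ of $k$-functions on $Q_n$, Lemma~\ref{Construction} produces the $(k+1)$-function
$$ h_{(f,g)} = \left(\frac{f+g}{2}\right) x_{n+1} + \left(\frac{f-g}{2}\right) x_{n+2} $$
on $Q_{n+2}$. Since there are $F(n,k)$ choices for each of $f$ and $g$, there are $F(n,k)^2$ such pairs, and the corollary follows once we check that the map $(f,g) \mapsto h_{(f,g)}$ is injective.

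Injectivity is the only thing to verify, and it is routine: I would recover $f$ and $g$ from $h = h_{(f,g)}$ by restricting to the appropriate subcubes of $Q_{n+2}$. Concretely, setting $x_{n+1}=1$ and $x_{n+2}=1$ gives $h = (f+g)/2 + (f-g)/2 \cdot 1 \cdot 1$... more cleanly, fixing $(x_{n+1},x_{n+2})=(1,1)$ yields the restriction $\tfrac{f+g}{2}+\tfrac{f-g}{2} = f$ and fixing $(x_{n+1},x_{n+2})=(1,-1)$ yields $\tfrac{f+g}{2}-\tfrac{f-g}{2} = g$, both viewed as functions on the copy of $Q_n$ indexed by $[n]$. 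Hence $h$ determines both $f$ and $g$, so the map is injective and $F(n+2,k+1)\geq F(n,k)^2$.

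I expect no real obstacle here; the content is entirely in Lemma~\ref{Construction}, which has already been proved, and the corollary is just a counting consequence. The one small point worth stating carefully is that the two functions $f,g$ recovered from $h$ are genuinely functions on $Q_n$ (i.e. the indices $n+1,n+2$ have been eliminated by the restriction), which is immediate from the construction.
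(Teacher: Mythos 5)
Your proposal is correct and matches the paper's argument: the paper's proof is simply the observation that in Lemma~\ref{Construction} distinct pairs $(f,g)$ give distinct $h$, and your restriction to $(x_{n+1},x_{n+2})=(1,1)$ and $(1,-1)$ is exactly the verification of that injectivity. No issues.
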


\begin{proof}
In Lemma~\ref{Construction}, distinct pairs $(f,g)$ give distinct $h$.

\end{proof}

\begin{cor}\label{LBF}
Let $k \geq 0$ be an integer. Then $F(2k,k) \geq 2^{2^k}$.
\end{cor}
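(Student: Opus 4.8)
The plan is to induct on $k$, running the recursion $F(n+2,k+1) \geq F(n,k)^2$ from Corollary~\ref{Squaring} along the diagonal $n = 2k$. Since squaring the lower bound at each step doubles the exponent, an initial bound of the form $2^{2^{k_0}}$ at some starting value $k_0$ propagates to $2^{2^k}$ for all $k \geq k_0$, which is exactly the shape of the claimed estimate. So the whole statement reduces to verifying a base case and then invoking Corollary~\ref{Squaring} once.

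For the base case I would take $k = 0$. Here $Q_0$ has a single vertex and $V_0$ is the one-dimensional space spanned by the constant function $\chi_\emptyset = 1$, so the only Boolean functions in $V_0$ are $\pm 1$. By Lemma~\ref{Criterion Lemma} (which we are now treating as the definition of a $k$-function) these are precisely the $0$-functions on $Q_0$, giving $F(0,0) = 2 = 2^{2^0}$. Alternatively one could start at $k=1$: on $Q_2$, which is a $4$-cycle, $V_1$ is spanned by $x_1$ and $x_2$, and the only Boolean functions it contains are $\pm x_1$ and $\pm x_2$, so $F(2,1) = 4 = 2^{2^1}$. Either choice works and is essentially immediate.

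For the inductive step, assume $F(2k,k) \geq 2^{2^k}$. Applying Corollary~\ref{Squaring} with $n = 2k$ gives $F(2(k+1),k+1) = F(2k+2,k+1) \geq F(2k,k)^2 \geq \bigl(2^{2^k}\bigr)^2 = 2^{2^{k+1}}$, which is the desired bound at $k+1$. This closes the induction. There is no real obstacle in this corollary itself: all of the combinatorial content sits in the doubling construction of Lemma~\ref{Construction} and its reformulation as Corollary~\ref{Squaring}, and Corollary~\ref{LBF} is just the observation that iterating that construction from a trivial starting point yields a double-exponential family of $k$-functions on $Q_{2k}$.
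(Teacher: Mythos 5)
Your proof is correct and is essentially the paper's own argument: the base case $F(0,0)=2$ followed by iterating Corollary~\ref{Squaring} along the diagonal $n=2k$, which doubles the exponent at each step. The extra care you take in verifying the base case is fine but not needed beyond noting that $\pm 1$ are the $0$-functions on $Q_0$.
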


\begin{proof}
This follows from $F(0,0)=2$ and iterating Corollary~\ref{Squaring} with $n=2k$.

\end{proof}

Lemma~\ref{Construction} gives a way of constructing a $(k+1)$-function given a pair of $k$-functions. One might ask whether every $(k+1)$-function arises in this way. It turns out this is not the case. We give an example of a $4$-function which cannot be obtained from a pair of $3$-functions in this way. Note that the $h$ in Lemma~\ref{Construction} is ``covered'' by  the indices $n+1$ and $n+2$, in the sense that for all $S \subseteq [n+2]$ with $\hat{h}(S) \neq 0$, either $n+1 \in S$ or $n+2 \in S$. Hence it is sufficient to construct a $4$-function $h$ which cannot be covered by two indices. \newline

We have $1$-functions $x_1$ and $x_2$, so by Lemma~\ref{Construction}, we have a $2$-function

$$ f(x_1,x_2,x_3,x_4) = \left(\frac{x_1+x_2}{2} \right) x_3 + \left(\frac{x_1-x_2}{2} \right) x_4 = \frac{x_1 x_3 + x_2 x_3 + x_1 x_4 - x_2 x_4}{2} .  $$ 

Let $g_1, g_2, g_3$ and $g_4$ be copies of $f$ with disjoint relevant indices. Let 

$$ h = f(g_1,g_2,g_3,g_4) = \frac{g_1 g_3 + g_2 g_3 + g_1 g_4 - g_2 g_4}{2} . $$   

Then it is easy to check that $h$ is a $4$-function with $64$ non-zero terms in its Fourier decomposition and that for every relevant index $i$ there are precisely $16$ sets $S$ with 
$i \in S$ and $\hat{h}(S) \neq 0$. Hence $h$ cannot be covered by $2$ indices.

\subsection{Relation between $F(n,k)$ and $S(q,t)$}

We now introduce a new function, $S(q,t)$, and prove an upper bound for $F(n,k)$ in terms of $S(q,t)$. For integers $q,t \geq 0$, let $S(q,t)$ denote the number of
$x \in \mathbb{Z}^t$ such that

$$ \sum_{i=1}^t x_i^2 = q .$$    

We then have the following lemma.

\begin{lemma}\label{F,S Lemma}
Let $1 \leq k \leq n$ be integers. Then $F(n,k) \leq S\left(4^{k-1},\binom{n}{k}\right)$.
\end{lemma}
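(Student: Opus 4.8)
The plan is to use Lemma~\ref{Criterion Lemma}, which says that a $k$-function $f$ on $Q_n$ is precisely a Boolean function lying in $V_k$, and then encode each such $f$ by its vector of Fourier coefficients $\bigl(\hat{f}(S)\bigr)_{|S|=k}$. This vector lives in $\mathbb{R}^{\binom{n}{k}}$, so the number of $k$-functions equals the number of coefficient vectors arising this way. First I would invoke Lemma~\ref{k-function coefficients} to see that $2^{k-1}\hat{f}(S) \in \mathbb{Z}$ for every $S$ with $|S|=k$; rescaling, $x_S := 2^{k-1}\hat{f}(S)$ defines a point $x \in \mathbb{Z}^{\binom{n}{k}}$. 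Next I would apply Parseval's Theorem (Lemma~\ref{boolean square sum 1}): since $f \in V_k$, all Fourier weight is concentrated on level $k$, so $\sum_{|S|=k} \hat{f}(S)^2 = 1$, and therefore $\sum_{|S|=k} x_S^2 = 4^{k-1}$.

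Combining these two observations, the map $f \mapsto (x_S)_{|S|=k}$ sends the set of $k$-functions on $Q_n$ injectively into the set of integer vectors in $\mathbb{Z}^{\binom{n}{k}}$ whose coordinate squares sum to $4^{k-1}$. Injectivity is immediate because a function on $Q_n$ is determined by its Fourier coefficients (the $\chi_S$ form a basis of $V$). The size of the target set is exactly $S\bigl(4^{k-1}, \binom{n}{k}\bigr)$ by definition. Hence $F(n,k) \le S\bigl(4^{k-1}, \binom{n}{k}\bigr)$, as claimed.

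This argument is essentially a bookkeeping exercise once the two Fourier-analytic inputs are in place, so I don't anticipate a serious obstacle; the only point requiring care is confirming that Lemma~\ref{k-function coefficients}'s hypothesis ($k \ge 1$) matches the hypothesis of this lemma ($1 \le k \le n$), which it does, and that the rescaling by $2^{k-1}$ turns the bound $\sum \hat{f}(S)^2 = 1$ cleanly into $\sum x_S^2 = (2^{k-1})^2 = 4^{k-1}$. One might worry about whether the bound is close to tight — it ignores the Boolean constraint entirely, keeping only the integrality and the $\ell^2$-norm condition — but for the purposes of the upper bound in Theorem~\ref{est.} this loss is acceptable, and the subsequent subsection will bound $S(q,t)$ directly.
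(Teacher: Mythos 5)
Your proposal is correct and follows essentially the same route as the paper: apply Lemma~\ref{Criterion Lemma} and Lemma~\ref{k-function coefficients} to write $\hat{f}(S)=x_S/2^{k-1}$ with $x_S\in\mathbb{Z}$ on level $k$, use Parseval to get $\sum_S x_S^2 = 4^{k-1}$, and note the map $f \mapsto x$ is injective. Nothing further is needed.
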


\begin{proof}
Let $f$ be a $k$-function on $Q_n$. By Lemma~\ref{k-function coefficients} and Lemma~\ref{Criterion Lemma}, $\hat{f}$ is $\frac{1}{2^{k-1}} \mathbb{Z}$-valued, so write $\hat{f}(S) = \frac{x_S}{2^{k-1}}$, where $x_S \in \mathbb{Z}$, for all $S \subseteq [n]$ with $|S|=k$. Note that $\hat{f}(S) = 0$ for $S \subseteq [n]$ with $|S| \neq k$  by Lemma~\ref{Criterion Lemma}. By Lemma~\ref{boolean square sum 1},

$$ \sum_{S \in \binom{[n]}{k}} x_S^2 = 4^{k-1} .$$

Distinct $f$ give distinct $x \in \mathbb{Z}^{\binom{[n]}{k}}$, so the result follows.   

\end{proof}

\subsection{An upper bound for $S(q,t)$} 

The function $S(q,t)$ has been studied in number theory, where it is denoted by $r_t(q)$. The author searched the literature but was only able to find estimates
in the regime where $t$ is fixed and $q$ is large, whereas for our purposes we need to consider the regime where both $q$ and $t$ are large and $t$ is 
much larger than $q$. When $t$ is much larger than $q$, most of the $x_i$ have to be 0, so the size of $S(q,t)$ is governed less by the number theory and more by
the combinatorics of choosing which $x_i$ are non-zero. We have the following upper bound for $S(q,t)$.

\begin{lemma}\label{UBS}
For all integers $t \geq q \geq 0$, we have 

$$S(q,t) \leq 2^{q \log_2 t + O(q \log_2 q)} .$$    
\end{lemma}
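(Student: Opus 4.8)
The plan is to bound $S(q,t)$ by separating the combinatorial choice of the support from the number-theoretic choice of values on that support. First I would note that any $x \in \mathbb{Z}^t$ with $\sum_i x_i^2 = q$ has at most $q$ non-zero coordinates (since each non-zero coordinate contributes at least $1$ to the sum), so we may first choose a subset $J \subseteq [t]$ with $|J| = j \leq q$ to be the support, and then count the vectors supported exactly on $J$. The number of choices of $J$ of size $j$ is $\binom{t}{j} \leq t^j \leq t^q$ (using $t \geq q \geq 1$; the case $q = 0$ is trivial since $S(0,t) = 1$). Summing over $j$ from $0$ to $q$ contributes at most a factor $q+1$, which is absorbed into the error term.

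Next I would bound, for a fixed support of size $j$, the number of integer vectors $(y_1, \dots, y_j)$ with all $y_i \neq 0$ and $\sum_{i=1}^j y_i^2 = q$. Crudely, each $y_i$ satisfies $y_i^2 \leq q$, so $y_i$ ranges over at most $2\sqrt{q} + 1$ values; hence the number of such vectors is at most $(2\sqrt{q}+1)^j \leq (2\sqrt{q}+1)^q$. Taking $\log_2$, this is $q \log_2(2\sqrt{q}+1) = \tfrac{1}{2} q \log_2 q + O(q) = O(q \log_2 q)$. Combining with the support count, we get
$$
S(q,t) \leq \sum_{j=0}^{q} \binom{t}{j} (2\sqrt{q}+1)^j \leq (q+1)\, t^q\, (2\sqrt{q}+1)^q ,
$$
and taking $\log_2$ gives $S(q,t) \leq 2^{q \log_2 t + O(q \log_2 q)}$, as desired. (One should handle $q = 0$ and $q = 1$ as degenerate cases where $O(q \log_2 q) = 0$ and the bound still holds, or simply absorb them since $S(0,t) = 1$ and $S(1,t) = 2t$.)

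I do not expect a serious obstacle here: the argument is essentially a two-step counting bound, and the only care needed is in verifying that the $\binom{t}{j} \leq t^j \leq t^q$ step is valid for all relevant $j$ (it is, since $t \geq 1$) and that the various lower-order factors — the sum over $j$, the $+1$ inside $2\sqrt{q}+1$, the constant $2$ — all fit inside $O(q \log_2 q)$. The potential subtlety is that the exponent in the statement is $q \log_2 t$, not $q \log_2 t + O(q \log_2 t)$, so one must make sure the main term is cleanly $q \log_2 t$ with everything else pushed into the $q$-only error; this is immediate from $\binom{t}{j} \leq t^j$ and the fact that the value-counting factor does not depend on $t$. If one wanted a tighter main term one could use $\binom{t}{j} \leq (et/j)^j$, but that is unnecessary for the stated bound.
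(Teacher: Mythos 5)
Your proof is correct and follows essentially the same strategy as the paper: both separate the combinatorial choice of the (at most $q$) non-zero coordinates, contributing roughly $\binom{t}{q} \leq t^q$, from the crude bound of at most $2\sqrt{q}+1$ values per non-zero coordinate, contributing $2^{O(q \log_2 q)}$. The only cosmetic difference is that you sum over the exact support size $j$ while the paper fixes a size-$q$ superset of the support and uses $S(q,t) \leq \binom{t}{q} S(q,q) \leq \binom{t}{q}\left(2\sqrt{q}+1\right)^q$; both yield the stated bound.
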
     

\begin{proof}
We first prove an upper bound for $S(q,t)$ for all integers $q,t \geq 0$. If $x \in \mathbb{Z}^t$ is such that $ \sum_{i=1}^t x_i^2 = q$, then $|x_i| \leq \sqrt{q}$ for all $i \in [t]$, so there are at most $2\sqrt{q}+1$ possibilities for each $x_i$. Hence $S(q,t) \leq \left(2\sqrt{q}+1\right)^t$. Now suppose $t \geq q \geq 0$ are integers. For each $x \in \mathbb{Z}^t$ with $ \sum_{i=1}^t x_i^2 = q$, the set $\{i \in [t] : x_i \neq 0\}$ has size at most $q$, so we can pick a subset of $[t]$ of size $q$ containing it. Then there are $\binom{t}{q}$ such subsets and for each subset there are at most $S(q,q)$ different $x \in \mathbb{Z}^t$ with $ \sum_{i=1}^t x_i^2 = q$ for which that subset is picked, so $S(q,t) \leq \binom{t}{q} S(q,q)$. Combining these two bounds, we have

$$ S(q,t) \leq \binom{t}{q} S(q,q) \leq \binom{t}{q} \left(2\sqrt{q}+1\right)^q = 2^{q \log_2 t + O(q \log_2 q)} . $$       

For the last equality, note that we have

$$ \frac{t^q}{q^q} \leq \binom{t}{q} \leq \frac{t^q}{q!} $$

for all integers $t \geq q \geq 0$ and hence

$$ \binom{t}{q} = 2^{q \log_2 t + O(q \log_2 q)} . $$

\end{proof}

By considering $x \in \mathbb{Z}^t$ with $x_i \in \{-1,1\}$ for $q$ different $i \in [t]$ and $x_i = 0$ for all other $i$, we have

$$ S(q,t) \geq \binom{t}{q} 2^q = 2^{q \log_2 t + O(q \log_2 q)} $$

for all integers $t \geq q \geq 0$. Hence the bound in Lemma~\ref{UBS} is tight.

\subsection{Proof of Theorem \ref{est.}}

We now put all our previous results together to prove Theorem \ref{est.}. 

\begin{thr}

Let $0 \leq k \leq n$ be integers. Then

$$2^{2^{m+o(1)}} \leq  G(n,k) \leq 2^{2^{2m+O\left(\log_2 m\right)}} , $$

where $m=min(k,n-k)$.

\end{thr}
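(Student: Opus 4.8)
The plan is to prove the two bounds separately, using the machinery already set up, and in both cases reduce to the case $m=k$ via Lemma~\ref{Symmetry Lemma} (so assume $n\geq 2k$).

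\textbf{Lower bound.} First I would combine Corollary~\ref{LBF} with Lemma~\ref{F,G Lemma} and Theorem~\ref{stab.}. Corollary~\ref{LBF} gives $F(2k,k)\geq 2^{2^k}$, and since the sequence $(G(n,k))_n$ is increasing, it suffices to bound $G(N,k)$ for $N$ around $n(k)\leq 4.394\cdot 2^k$, where $G$ stabilises. Applying Lemma~\ref{F,G Lemma} at, say, $n=4.394\cdot 2^k$ (or at $n=2k$ if $n<n(k)$, using monotonicity in the other direction — one has to be a little careful which $n$ to plug in, but either way $G(n,k)\geq F(2k,k)/(2^{n+1}n!)$ with $n$ linear in $2^k$), we get
$$ G(n,k)\geq \frac{2^{2^k}}{2^{n+1}n!} = 2^{2^k - O(k\,2^k)} = 2^{2^{k+o(1)}}, $$
since $\log_2(2^{n+1}n!) = O(2^k\log_2 2^k) = O(k\,2^k) = 2^{k+\log_2 k + O(1)} = 2^{k+o(1)} \cdot o(2^k)$ — more precisely the denominator only shaves a $2^{k+o(1)}$ additive term off the exponent $2^k$, so the exponent is still $2^k(1-o(1)) = 2^{k+o(1)}$. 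When $m=n-k$ instead of $k$, apply the symmetry lemma first. This handles $2^{2^{m+o(1)}}\leq G(n,k)$.

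\textbf{Upper bound.} Here I would chain Lemma~\ref{F,G Lemma}, Lemma~\ref{F,S Lemma}, and Lemma~\ref{UBS}. We have $G(n,k)\leq F(n,k)\leq S\!\left(4^{k-1},\binom{n}{k}\right)$, and since $\binom{n}{k}\geq 4^{k-1}$ for $n\geq 2k$ and $k$ large enough (the small cases being absorbed into the $O$-term), Lemma~\ref{UBS} applies with $q=4^{k-1}$ and $t=\binom{n}{k}$, giving
$$ S\!\left(4^{k-1},\tbinom{n}{k}\right)\leq 2^{\,4^{k-1}\log_2\binom{n}{k} + O(4^{k-1}\log_2 4^{k-1})}. $$
Now $\log_2\binom{n}{k}\leq \log_2(n^k) = k\log_2 n$ and $\log_2 4^{k-1} = O(k)$, so the exponent of the outer power is $4^{k-1}\cdot O(k\log_2 n) = 2^{2k-2}\cdot 2^{\log_2 k + \log_2\log_2 n + O(1)} = 2^{2k + O(\log_2 k) + O(\log_2\log_2 n)}$. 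To close off the $\log_2 n$ dependence, I would invoke Theorem~\ref{stab.}: $G(n,k)$ stabilises once $n\geq n(k)\leq 4.394\cdot 2^k$, so it suffices to take $n\leq 4.394\cdot 2^k$ (for $n$ beyond that, $G(n,k)=G(n(k),k)$), whence $\log_2\log_2 n = O(\log_2\log_2(4.394\cdot 2^k)) = O(\log_2 k)$, and the exponent becomes $2^{2k+O(\log_2 m)}$ with $m=k$. Again the case $m=n-k$ is dispatched by Lemma~\ref{Symmetry Lemma} first.

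\textbf{Main obstacle.} The genuinely delicate part is bookkeeping the $o(1)$ and $O(\log_2 m)$ terms uniformly over all pairs $(n,k)$ with $0\leq k\leq n$, including the degenerate ranges where $m$ is small or $m$ is comparable to $n$. In particular one must be sure that in the lower bound the factor $2^{n+1}n!$ really only costs $2^{m+o(1)}$ in the exponent (not more), which relies crucially on the stabilisation bound $n(k)\leq 4.394\cdot 2^k$ to keep $n$ from being wildly larger than $2^k$; and in the upper bound one must verify $t=\binom{n}{k}\geq q=4^{k-1}$ so that Lemma~\ref{UBS} is legitimately applicable, handling the finitely many exceptional small $k$ by hand. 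Everything else is assembling inequalities that are already in hand.
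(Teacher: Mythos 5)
Your upper bound argument is essentially the paper's: reduce to $m=k$ by Lemma~\ref{Symmetry Lemma}, chain Lemma~\ref{F,G Lemma}, Lemma~\ref{F,S Lemma} and Lemma~\ref{UBS}, and use $n(k) \leq 4.394 \cdot 2^k$ from Theorem~\ref{stab.} to control $\log_2 \binom{n}{k}$; your extra check that $t=\binom{n}{k} \geq q = 4^{k-1}$ is a harmless refinement. The lower bound, however, is where you go wrong. As you primarily execute it, you apply Lemma~\ref{F,G Lemma} at $n$ of order $n(k) \approx 4.394 \cdot 2^k$, so the denominator $2^{n+1}n!$ has $\log_2(2^{n+1}n!) = \Theta(n \log_2 n) = \Theta(k \, 2^k)$. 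Since $k\,2^k = 2^{k+\log_2 k}$ is asymptotically \emph{larger} than the numerator's exponent $2^k$, the quantity $2^{2^k}/(2^{n+1}n!)$ is less than $1$ for large $k$ and the bound is vacuous. Your chain ``$2^{2^k - O(k\,2^k)} = 2^{2^{k+o(1)}}$'' and the claim that the denominator ``only shaves a $2^{k+o(1)}$ additive term off the exponent $2^k$'' are simply false: $\Theta(k\,2^k)$ is not $2^{k+o(1)}$, and $2^k - \Theta(k\,2^k)$ is negative, not $2^k(1-o(1))$. Your ``main obstacle'' paragraph compounds this by asserting that the stabilisation bound $n(k) \leq 4.394\cdot 2^k$ is crucial for the lower bound; it is not needed there at all (it is needed only for the upper bound), and in fact no choice of $n$ comparable to $2^k$ in the denominator can be rescued, because you have no lower bound on $F(n,k)$ better than $F(2k,k) \geq 2^{2^k}$ to compensate.

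The correct move, which is the paper's and which you mention only parenthetically without committing to it, is to use the monotonicity part of Theorem~\ref{stab.} to pass from $G(n,k)$ down to $G(2k,k)$ (valid since $n \geq 2k$ after the symmetry reduction) and then apply Lemma~\ref{F,G Lemma} at $n=2k$: $G(n,k) \geq G(2k,k) \geq F(2k,k)/\bigl(2^{2k+1}(2k)!\bigr) \geq 2^{2^k}/2^{O(k \log_2 k)}$ by Corollary~\ref{LBF}, and now the loss in the exponent is only $O(k\log_2 k) = o(2^k)$, giving $2^{2^k(1-o(1))} = 2^{2^{k+o(1)}}$. With that substitution your outline becomes the paper's proof; without it, the lower bound as written does not establish the theorem.
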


\begin{proof}

By Lemma~\ref{Symmetry Lemma}, we may assume that $m=k$, i.e. that $n \geq 2k$. We first prove the lower bound. We have

\begin{align*}
G(n,k) &\geq G(2k,k) & &\text{(by Theorem \ref{stab.})} \\
&\geq \frac{F(2k,k)}{2^{2k+1}(2k)!} & &\text{(by Lemma \ref{F,G Lemma})} \\
&\geq \frac{2^{2^k}}{2^{2k+1}(2k)!} = 2^{2^{k+o(1)}} . & &\text{(by Corollary \ref{LBF})}  
\end{align*}

We now prove the upper bound. We have

\begin{align*}
G(n,k) &\leq G(n(k),k) & &\text{(by Theorem \ref{stab.})} \\
&\leq F(n(k),k) & &\text{(by Lemma \ref{F,G Lemma})} \\
&\leq S\left(4^{k-1},\binom{n(k)}{k}\right) & &\text{(by Lemma \ref{F,S Lemma})} \\
&\leq 2^{2^{2k + O\left(\log_2 k\right)}} . & &\text{(by Lemma \ref{UBS} and Theorem \ref{stab.})}  
\end{align*}

\end{proof}

\section{Acknowledgement}

The author thanks Robert Johnson for suggesting the problem and for useful feedback on a draft of this paper and the anonymous referee for helpful suggestions. This work was supported by the Engineering and Physical Sciences Research Council.


\begin{thebibliography}{99}

\bibitem{Reference 1} Renan Gross and Uri Grupel. \textit{Indistinguishable sceneries on the Boolean hypercube.} Combinatorics, Probability and Computing, Volume 28, Issue 1, January 2019, pp. 46 - 60.
 
\bibitem{Reference 2} Ryan O'Donnell. \textit{Analysis of Boolean Functions.} Cambridge University Press, New York, NY, USA, 2014. 

\bibitem{Reference 3} Jake Wellens. \textit{Relationships between the number of inputs and other complexity measures of Boolean functions.} arXiv preprint, arXiv:2005.00566, May 2020.

\bibitem{Reference 4} John Chiarelli, Pooya Hatami and Michael Saks. \textit{An Asymptotically Tight Bound on the Number of Relevant Variables in a Bounded Degree Boolean function.} Combinatorica 40, 237–244, March 2020.

\bibitem{Reference 5} Peter van Hintum. \textit{Biased Partitions of $\mathbb{Z}^n$.} European Journal of Combinatorics, Volume 79, 2019, Pages 262-270.

\bibitem{Reference 6} Itai Benjamini, and Harry Kesten. \textit{Distinguishing sceneries by observing the scenery along a random walk path.} Journal d’Analyse Mathématique, Volume 69, 1996, pages 97–135. 

\bibitem{Reference 7} Hilary Finucanea, Omer Tamuza and Yariv Yaaria. \textit{Scenery Reconstruction on Finite Abelian Groups.} Stochastic Processes and their Applications, Volume 124.8, August 2014, pp.2754-2770.

\bibitem{Reference 8} Elon Lindenstrauss. \textit{Indistinguishable sceneries.} Random Structures and Algorithms, Volume 14.1, January 1999, pp. 71-86.




\end{thebibliography}
\end{document}